\begin{document}

\author{Dragomir \v Sari\' c}
\thanks{This research is partially supported by National Science Foundation grant DMS 1102440.}

\address{Department of Mathematics, Queens College of CUNY,
65-30 Kissena Blvd., Flushing, NY 11367}
\email{Dragomir.Saric@qc.cuny.edu}

\address{Mathematics PhD. Program, The CUNY Graduate Center, 365 Fifth Avenue, New York, NY 10016-4309}

\theoremstyle{definition}

 \newtheorem{definition}{Definition}[section]
 \newtheorem{remark}[definition]{Remark}
 \newtheorem{example}[definition]{Example}

\newtheorem*{notation}{Notation}

\theoremstyle{plain}

 \newtheorem{proposition}[definition]{Proposition}
 \newtheorem{theorem}[definition]{Theorem}
 \newtheorem{corollary}[definition]{Corollary}
 \newtheorem{lemma}[definition]{Lemma}

\def\H{{\mathbb H}}
\def\F{{\mathcal F}}
\def\R{{\mathbb R}}
\def\Q{{\mathbb Q}}
\def\Z{{\mathbb Z}}
\def\E{{\mathcal E}}
\def\N{{\mathbb N}}
\def\X{{\mathcal X}}
\def\Y{{\mathcal Y}}
\def\C{{\mathbb C}}
\def\D{{\mathbb D}}
\def\G{{\mathcal G}}

\title[Thurston's boundary]{Thurston's boundary for Teichm\"uller spaces of infinite surfaces: the length spectrum}

\subjclass{}

\keywords{}
\date{\today}

\maketitle

\begin{abstract}
Let $X$ be an infinite geodesically complete hyperbolic surface which can be decomposed into geodesic pairs of pants. We introduce Thurston's 
boundary to the Teichm\"uller space $T(X)$ of the surface $X$ using the length spectrum analogous to Thurston's construction for finite surfaces. Thurston's 
boundary using the length spectrum of $X$ is a ``closure'' of projective bounded measured laminations $PML_{bdd}
(X)$, and it coincides with $PML_{bdd}(X)$ when $X$ can be decomposed into a countable union of geodesic pairs of pants whose 
boundary geodesics $\{\alpha_n\}_{n\in\mathbb{N}}$ have lengths pinched between two positive constants. When a subsequence of the lengths of the boundary curves of the geodesic pairs of pants $\{\alpha_n\}_n$ converges to zero, Thurston's 
boundary using the length spectrum is strictly larger than $PML_{bdd}(X)$.\end{abstract}

\section{Introduction}

Fix a geodesically complete infinite area hyperbolic surface $X_0$. The space of all quasiconformal deformations of $X$ modulo conformal maps and homotopies is an infinite-dimensional 
manifold called the Teichm\"uller space $T(X_0)$ of $X_0$. We study the limiting 
behaviour 
of the quasiconformal deformations of $X$ when the dilatations of the quasiconfomal 
maps increase without bound using the length spectrum. Thurston \cite{Th1}, \cite{FLP} 
used the length spectrum to compactify the Teichm\"uller space of a closed surface by 
adding to it the space of projective measured laminations of the surface. Bonahon \cite{Bo} 
used geodesic currents to give an alternative description of Thurston's boundary for 
the Teichm\"uller space of a closed surface. In \cite{Sar5}, geodesic currents were used to introduce Thurston's boundary to $T(X)$.

A complete hyperbolic surface is obtained by gluing geodesic pairs of pants (with possible at most two punctures on the boundary) and by adding at most countably many funnels with closed geodesic boundary and half-planes with boundary infinite geodesics (cf. \cite{BasSar}). 
Since we consider the length spectrum, it is natural to restrict our attention to geodesically complete, infinite hyperbolic surfaces that are obtained by gluing countably many geodesic pairs of pants (cf. \cite{Shi}, \cite{BK}, \cite{ALPS}). For such surfaces, the Teichm\"uller space is completely determined by the marked length spectrum.
The study of the length spectrum properties for infinite 
surfaces is started by Shiga \cite{Shi}, and it was further developed by various 
authors(e.g. \cite{ALPS}, \cite{ALPS1}, \cite{BK} \cite{Matz}, \cite{Kin}, \cite{Sa1},...).

Denote by $[f]\in T(X_0)$ the equivalence class of a quasiconformal map $f:X_0\to X$.
Let $\mathcal{S}$ be the set of all simple closed geodesics on $X_0$.
Homotopy class of a quasiconformal map $f:X_0\to X$ induces a function from $\mathcal{S}$ to $\mathbb{R}$ which assigns to each $\alpha\in\mathcal{S}$ the length of a geodesic in $X$ that is homotopic to $f(\alpha )$.  
Thus we have an injective map
$$
\mathcal{X}:T(X_0)\to\mathbb{R}_{\geq 0}^{\mathcal{S}}.
$$
When $X_0$ is a closed hyperbolic surface then the above map is a homeomorphism onto its image if $\mathbb{R}_{\geq 0}^{\mathcal{S}}$ is equipped with the weak* topology (cf. \cite{FLP}). In the case of an infinite surface with a geodesic pants decomposition, the {\it length spectrum metric} is defined by (cf. \cite{Shi}, \cite{ALPS1})
\begin{equation*}
d_{ls}([f],[g])=\sup_{\alpha\in\mathcal{S}}\Big{|}\log \frac{l_{f(X_0)}(f(\alpha ))}{l_{g(X_0)}(g(\alpha ))}\Big{|}.
\end{equation*}
Shiga \cite{Shi}  proved that the topology induced by the length spectrum metric on $T(X_0)$ is equal to the Teichm\"uller topology when the surface $X$ has a geodesic pants decomposition with lengths of boundary geodesics (of the pants) pinched between two positive constants. Allessandrini, Liu, Papadopoulos and Su \cite{ALPS} proved that the length spectrum on $T(X_0)$ is not  complete when $X_0$ contains a sequence of simple closed geodesics whose length goes to zero. Thus the two topologies in this case are different.

We introduce a {\it normalized supremum norm} on $\mathbb{R}^{\mathcal{S}}_{\geq 0}$  by
\begin{equation*}
\| f\|_{\infty}^{norm}=\sup_{\alpha\in\mathcal{S}}\Big{|}
\frac{f(\alpha )}{l_{X_0}(\alpha )}
\Big{|}
\end{equation*}
for $f\in \mathbb{R}^{\mathcal{S}}_{\geq 0}$. The normalized supremum norm on $\mathbb{R}^{\mathcal{S}}_{\geq 0}$ makes the map $
\mathcal{X}:T(X_0)\to\mathbb{R}_{\geq 0}^{\mathcal{S}}
$ a homeomorphism onto its image (cf. Lemma \ref{prop:two_metrics}).

Analogous to the closed surface case, we projectivize $\mathcal{X}$ and obtain an injective map
$$
P\mathcal{X}:T(X_0)\to P\mathbb{R}^{\mathcal{S}}_{\geq 0}.
$$
By definition, {\it (length spectrum) Thurston's boundary} of $T(X_0)$ consists of the boundary points of the image $P\mathcal{X}(T(X_0))$ of $T(X_0)$, where $P\mathbb{R}^{\mathcal{S}}_{\geq 0}$ is given the quotient topology with respect to the normalized supremum norm on $\mathbb{R}^{\mathcal{S}}_{\geq 0}$.

\vskip .2 cm

\noindent {\bf Theorem 1.}
{\it
Let $X_0$ be an infinite area geodesically complete hyperbolic surface that has a geodesic pants decomposition with boundary geodesics of pair of pants $\{\alpha_n\}_{n\in\mathbb{N}}$. Then  (length spectrum) Thurston's boundary of $T(X_0)$ is the closure of the space of projective bounded measured laminations $PML_{bdd}(X_0)$ in $P\mathbb{R}^{\mathcal{S}}$, where $P\mathbb{R}^{\mathcal{S}}$ has the quotient topology induced by the topology on $\mathbb{R}^{\mathcal{S}}$ coming from the normalized supremum norm. 

If the lengths of $\{\alpha_n\}_{n\in\mathbb{N}}$ are pinched between two positive constants then length spectrum Thurston's boundary is equal to $PML_{bdd}(X_0)$.

If the lengths of $\{\alpha_n\}_{n\in\mathbb{N}}$ are bounded from the above and there exists a subsequence $\{\alpha_{n_k}\}$ whose lengths converge to $0$, then length spectrum Thurston's boundary is strictly larger than $PML_{bdd}(X_0)$.
}

\vskip .2 cm

In addition, Thurston's boundary of a hyperbolic surface $X_0$ whose every geodesic pants decomposition does not have an upper bound on the lengths of cuffs but that can be decomposed into bounded polygons with at most $n$ sides (introduced by Kinjo \cite{Kin}) equals $PML_{bdd}(X_0)$. On the other hand, if $X_0$ is the surface constructed by Shiga \cite{Shi} such that the length spectrum metric is incomplete, then length spectrum Thurston's boundary is strictly larger than $PML_{bdd}(X_0)$ (cf. \S \ref{sec:twosurfaces}).

\vskip .2 cm

Recall that the quasiconformal Mapping Class Group $MCG_{qc}(X_0)$ consists of all quasiconformal maps $g:X_0\to X_0$ up to homotopy (cf. \cite{GL}). The action of $MCG_{qc}(X_0)$ on the Teichm\"uller space $T(X_0)$ is given by
$
[f]\mapsto [f\circ g^{-1}]$ and it is continuous in the Teichm\" uller metric. Therefore it is also continuous in the length spectrum metric. The normalised supremum norm on $\mathbb{R}^{\mathcal{S}}_{\geq 0}$ is invariant under the change of markings of $\mathcal{S}$ because it is the supremum over all simple closed curves $\mathcal{S}$. Therefore we obtain

\vskip .2 cm

\noindent {\bf Theorem 2.} {\it The action of the quasiconformal Mapping Class Group $MCG_{qc}(X_0)$ on the Teichm\"uller space $T(X_0)$ extends to a continuous action on (length spectrum) Thurston's closure of $T(X_0)$.}

\section{Teichm\"uller spaces of geometrically infinite hyperbolic surfaces}

Let $X_0$ be a geodesically complete hyperbolic surface  whose area is infinite. The universal covering $\tilde{X}_0$ of the surface $X_0$ is isometrically identified with the hyperbolic plane $\mathbb{H}$. The boundary at infinity $\partial_{\infty}\tilde{X}_0$ is identified with the unit circle $S^1$.

The {\it Teichm\"uller space} $T(X_0)$ of the surface $X_0$ is the space of equivalence classes of all quasiconformal maps $f:X_0\to X$ where $X$ is an arbitrary complete hyperbolic surface modulo an equivalence relation. Two quasiconformal maps $f_1:X_0\to X_1$ and $f_2:X_0\to X_2$ are {\it equivalent} if there exists an isometry $I:X_1\to X_2$ such that $f_2^{-1}\circ I\circ f_1$ is homotopic to the identity under a bounded homotopy. Denote by $[f]$ the equivalence class of a quasiconformal map $f:X_0\to X$.

The {\it Teichm\"uller distance} on $T(X_0)$ is defined by
$$
d_{T}([f_1],[f_2])=\frac{1}{2}\log \inf_{g\simeq f_2\circ f_1^{-1}}K(g)
$$
where the infimum is taken over all quasiconformal maps $g$ homotopic to $f_2\circ f_1^{-1}$ and $K(g)$ is the quasiconformal constant of $g$. The {\it Teichm\"uller topology} on $T(X_0)$ is the topology induced by the Teichm\"uller distance.

\section{Measured laminations and earthquakes}

A {geodesic lamination} on a hyperbolic surface $X$ is a closed subset of $X$ that is foliated by non-intersecting complete geodesics called {\it leaves} of the lamination. Geodesic lamination on $X$ lifts to a geodesic lamination on $\mathbb{H}$ that is invariant under the action  of the covering group of $X$. A {\it stratum} of a geodesic lamination is either a leaf of the lamination or a connected component of the complement. A connected component of the complement of a geodesic lamination in $\mathbb{H}$ is isometric to a possibly infinite sided geodesic polygon whose sides are complete geodesics and possibly arcs on $S^1$. 

A {\it measured lamination} $\mu$ on $X$ is an assignment of a positive Borel measure on each arc transverse to a geodesic lamination $|\mu |$ that is invariant under homotopies relative leaves of $|\mu |$. The geodesic lamination $|\mu |$ is called the {\it support} of $\mu$. A measured lamination on $X$ lifts to a measured lamination on $\mathbb{H}$ that is invariant under the covering group of $X$.

 A {\it left earthquake} $E:X_0\to X$ with support geodesic lamination $\lambda$ is a surjective map that is isometry on each stratum of $\lambda$ such that each stratum is moved to the left relative to any other stratum. 
An earthquake of $X_0$ lifts to an earthquake of $\mathbb{H}$ where the support is the lift of the support on $X_0$ (cf. Thurston \cite{Th1}).

We give a definition of a (left) earthquake $E:\mathbb{H}\to\mathbb{H}$ with support geodesic lamination $\lambda$ on $\mathbb{H}$. A {\it left earthquake} $E:\mathbb{H}\to\mathbb{H}$ is a bijection of $\mathbb{H}$ whose restriction to any stratum of $\lambda$ is an isometry of $\mathbb{H}$; if $A$ and $B$ are two strata of $\lambda$ then
$$
(E|_A)^{-1}\circ E|_B
$$
is a hyperbolic translation whose axis weakly separates $A$ and $B$ that moves $B$ to the left as seen from $A$ (cf. Thurston \cite{Th1}).

An earthquake $E:\mathbb{H}\to\mathbb{H}$ induces a transverse measure $\mu$ to its support $\lambda$ which defines a measured lamination $\mu$ with $|\mu |=\lambda$ (cf. \cite{Th1}). 
An earthquake of $\mathbb{H}$ extends by continuity to a homeomorphism of $S^1$. Thurston's earthquake theorem states that any homeomorphism of $S^1$ can be obtained by continuous extension of a left earthquake (cf. Thurston \cite{Th1}).

Given a measured lamination $\mu$, there exists a map $E^{\mu}:\mathbb{H}\to\mathbb{H}$ whose transverse measure is $\mu$ and that satisfies all properties in the definition of an earthquake of $\mathbb{H}$ except being onto (cf. \cite{Th1}, \cite{GHL}). $E^{\mu}$ is uniquely determined by $\mu$ up to post-composition by an isometry of $\mathbb{H}$.

We define {\it Thurston's norm} of a measured lamination $\mu$ as
$$
\|\mu\|_{Th}=\sup_J i(\mu ,J)
$$
where the supremum is over all hyperbolic arcs $J$ of length $1$.

A quasiconformal map of $X$ lifts to a quasiconformal map of $\mathbb{H}$ and the later extends to a quasisymmetric map of $S^1$. 
Therefore we consider measured 
laminations whose earthquakes induces quasisymmetric maps of $S^1$. 
An earthquake $E^{\mu}$ extends by continuity to a quasisymmetric map of $S^1$ if and only if $\|\mu
\|_{Th}<\infty$ (cf. \cite{Th1}, \cite{GHL}, \cite{Sa1}, \cite{Sa2}).

Denote by $ML_{bdd}(X)$ and $ML_{bdd}(\mathbb{H})$ the space of all measured laminations 
with finite Thurston's norm on $X$ and $\mathbb{H}$, respectively. The above result gives a bijective map
$$
EM:T(X)\to ML_{bdd}(X).
$$

Note that $\| t\mu\|_{Th}=t\|\mu\|_{Th}$, for $t>0$. Then, for $\|\mu\|_{Th}<\infty$, we have that
$t\mapsto E^{t\mu}$, for $t>0$, is a path in $T(X)$ called {\it earthquake path}.

\section{Thurston's boundary for Teichm\"uller spaces of infinite surfaces using the length spectrum}

In this section we consider infinite type hyperbolic surfaces and introduce 
 ``length spectrum'' Thurston's boundary to their Teichm\"uller spaces. It turns out that length spectrum Thurston's boundary differs 
from Thurston's boundary introduced using geodesic currents (see \cite{Sar5} for the construction using geodesic currents).

Recall that $X_0$ is a geodesically complete hyperbolic surface that has a 
geodesic pants decomposition. In other words, $X_0$ is formed by gluing infinitely many geodesic pairs of pants along their boundaries.

 Let $\{\alpha_n\}_{n\in\mathbb{N}}$ be the family of cuffs (i.e. boundary 
 components) of a geodesic pants decomposition of $X_0$ as above.
 Then each $\alpha_n$ is a simple closed geodesic or a puncture. We say that $\{\alpha_n\}$ is an {\it upper-bounded geodesic pants decomposition} of $X_0$ if there exists $M>0$ such that, for each $n\in\mathbb{N}$,
 $$
 l_{X_0}(\alpha_n)\leq M
 $$
where $l_{X_0}(\alpha_n)$ is the length of $\alpha_n$ for the hyperbolic metric of $X_0$ (cf. \cite{ALPS}). Moreover, $\{\alpha_n\}$ is a {\it lower-bounded geodesic pants decomposition} of $X_0$ if there exists $m>0$ such that, for each $n\in\mathbb{N}$,
 $$
 l_{X_0}(\alpha_n)\geq m.
 $$

\subsection{General infinite surfaces}

Denote by $\mathcal{S}$ the set of all simple closed geodesics on a geodesically complete hyperbolic surface
$X_0$ equipped with a geodesic pants decomposition. Let $\mathbb{R}_{\geq 0}^{\mathcal{S}}$ be the space of non-negative functions on the set of all simple 
closed geodesics $\mathcal{S}$ of $X_0$. We define a map $\mathcal{X}$ from the Teichm\"uller space $T(X_0)$ into $\mathbb{R}_{\geq 0}^{\mathcal{S}}$, for $[f]\in T(X_0)$ and $\alpha\in\mathcal{S}$,
$$
\mathcal{X}([f])(\alpha )=l_{f(X_0)}(f(\alpha )),
$$
where $f(X_0)$ is the image hyperbolic surface under quasiconformal mapping $f$ and $l_{f(X_0)}(f(\alpha ))$ is the length of a simple closed geodesic on $f(X_0)$  homotopic to a simple closed curve $f(\alpha )$.
The map $\mathcal{X}:T(X_0)\to \mathbb{R}_{\geq 0}^{\mathcal{S}}$ is injective.

The {\it length spectrum metric} on $T(X_0)$ is given by
\begin{equation*}
d_{ls}([f_1],[f_2])=\sup_{\delta\in \mathcal{S}}\Big{\{} 
|\log\frac{l_{f_2(X_0)}(f_2(\delta ))}{l_{f_1(X_0)}(f_1(\delta ))}|\Big{\}}.
\end{equation*}
Shiga \cite{Shi} proved that 
if $X_0$ has an upper and lower bounded geodesic pants decomposition $\{\alpha_n\}_{n\in\mathbb{N}}$ then the Teichm\"uller distance induces the same topology as the length-spectrum distance on $T(X_0)$.

We introduce the {\it normalized supremum norm} on $\mathbb{R}^{\mathcal{S}}$ by
$$
\| f\|_{\infty}^{norm}=\sup_{\delta\in\mathcal{S}}\frac{|f(\delta )|}{l_{X_0}(\delta )}
$$
for all $f\in \mathbb{R}^{\mathcal{S}}$. Note that the normalized supremum norm on $ \mathbb{R}^{\mathcal{S}}$ is infinite at some points of $\mathbb{R}^{\mathcal{S}}$. We consider only the subset of $ \mathbb{R}^{\mathcal{S}}$ where the normalized supremum is finite and, for simplicity, denote it by $ \mathbb{R}^{\mathcal{S}}$.

\begin{proposition}
\label{prop:two_metrics}
The length spectrum metric on $T(X_0)$ is locally bi-Lipschitz equivalent to the  normalized supremum norm on $\mathcal{X}(T(X_0))$.
\end{proposition}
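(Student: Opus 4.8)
The plan is to reduce the comparison of the two metrics to a single elementary pointwise estimate and then control that estimate uniformly over the infinite family $\mathcal{S}$ by a quasiconformal length-distortion bound. Fix $[f_1],[f_2]\in T(X_0)$ and, for a simple closed geodesic $\delta\in\mathcal{S}$, abbreviate $a=l_{f_1(X_0)}(f_1(\delta))$, $b=l_{f_2(X_0)}(f_2(\delta))$ and $c=l_{X_0}(\delta)$, all positive. The summand defining $d_{ls}$ is $|\log(b/a)|$, while the summand defining $\|\mathcal{X}([f_1])-\mathcal{X}([f_2])\|_\infty^{norm}$ is $|a-b|/c$. Applying the mean value theorem to $\log$, there is $\xi$ between $a$ and $b$ with $|a-b|=\xi\,|\log(b/a)|$, so that
$$\frac{|a-b|}{c}=\frac{\xi}{c}\,\Big|\log\frac{b}{a}\Big|.$$
Thus the two summands differ precisely by the factor $\xi/c$, and it suffices to pinch this factor between two positive constants, uniformly in $\delta$, on a suitable neighborhood.

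The key step is to bound $\xi/c$. Since $\xi$ lies between $a$ and $b$, it is enough to pinch both $a/c$ and $b/c$, uniformly in $\delta$. For this I would invoke Wolpert's inequality: a $K$-quasiconformal map distorts the hyperbolic length of every closed geodesic by a factor in $[1/K,K]$, uniformly in the geodesic. Applied to a fixed base point $[f_0]$ of the neighborhood, this shows in particular that $D:=d_{ls}([f_0],[\mathrm{id}_{X_0}])=\sup_{\delta}\big|\log(l_{f_0(X_0)}(f_0(\delta))/l_{X_0}(\delta))\big|$ is finite. Now restrict to the length-spectrum ball of radius $r$ about $[f_0]$. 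For any $[f_i]$ in this ball and any $\delta$, the triangle inequality gives
$$\Big|\log\frac{l_{f_i(X_0)}(f_i(\delta))}{l_{X_0}(\delta)}\Big|\le d_{ls}([f_i],[f_0])+D\le r+D,$$
so that $a/c,\,b/c\in[e^{-(r+D)},e^{r+D}]$, and hence $\xi/c\in[e^{-(r+D)},e^{r+D}]=:[1/K_0,K_0]$ uniformly in $\delta$.

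Combining the two steps, for every $\delta$ one has $\tfrac{1}{K_0}|\log(b/a)|\le |a-b|/c\le K_0|\log(b/a)|$; taking suprema over $\delta\in\mathcal{S}$ (the same $K_0$ controls both inequalities, so the two suprema, although possibly attained along different curves, remain comparable) yields
$$\frac{1}{K_0}\,d_{ls}([f_1],[f_2])\le \big\|\mathcal{X}([f_1])-\mathcal{X}([f_2])\big\|_\infty^{norm}\le K_0\,d_{ls}([f_1],[f_2]).$$
This is the asserted bi-Lipschitz equivalence, with constant $K_0=e^{r+D}$ depending only on the neighborhood; the statement is genuinely local since $D$ grows without bound as $[f_0]$ moves away from the base point.

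The main obstacle is exactly the uniformity of the pinching of $\xi/c$ across the infinite set $\mathcal{S}$: for a closed surface this is automatic, whereas here $\mathcal{S}$ is infinite and $c=l_{X_0}(\delta)$ may be arbitrarily small or large. What rescues the argument is that Wolpert's bound depends only on the quasiconformal constant and not on the chosen geodesic, which is precisely what promotes "finite dilatation'' to a uniform multiplicative control of lengths over all of $\mathcal{S}$. I would take care to phrase the length-spectrum ball so that $r+D$ stays finite, and to verify that no degeneracy arises from punctures (these are excluded, as $\mathcal{S}$ consists of genuine simple closed geodesics, so $a,b,c>0$ throughout).
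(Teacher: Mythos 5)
Your argument is correct and follows essentially the same route as the paper: both proofs rest on Wolpert's quasiconformal length-distortion bound to pinch the ratios $l_{f_i(X_0)}(f_i(\delta))/l_{X_0}(\delta)$ between two positive constants uniformly in $\delta$, and then on the elementary comparability of $|\log x|$ and $|x-1|$ near $x=1$ (which you obtain via the mean value theorem and the paper states directly). Your handling of locality via a length-spectrum ball about a base point is a slightly more explicit bookkeeping of the same dependence of the constant on the point.
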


\begin{proof}
Indeed, if 
$$
\sup_{\delta\in\mathcal{S}}\Big{|}\frac{l_{f_1(X_0)}(f_1(\delta ))}{l_{X_0}(\delta )}-
\frac{l_{f_2(X_0)}(f_2(\delta ))}{l_{X_0}(\delta )}\Big{|}
<\epsilon
$$
then
$$
\sup_{\delta\in\mathcal{S}}\frac{l_{f_1(X_0)}(f_1(\delta ))}{l_{X_0}(\delta )}\Big{|}1-
\frac{l_{f_2(X_0)}(f_2(\delta ))}{l_{f_1(X_0)}(f_1(\delta ))}\Big{|}
<\epsilon.
$$
Since $f_1$ is a quasiconformal map, there exists $M>1$ such that $1/M
\leq \frac{l_{f_1(X_0)}(f_1(\delta ))}{l_{X_0}(\delta )}\leq M$ (cf. Wolpert \cite{Wol}).
The above and symmetry implies
$$
\Big{|}\frac{l_{f_2(X_0)}(f_2(\delta ))}{l_{f_1(X_0)}(f_1(\delta ))}
-1\Big{|}, \Big{|}\frac{l_{f_1(X_0)}(f_1(\delta ))}{l_{f_2(X_0)}(f_2(\delta ))}
-1\Big{|}\leq M\epsilon
$$
for all $\delta\in \mathcal{S}$,
and one direction is obtained since $|\log x|/ |x-1|$ is between two positive constants for $1/2<x<2$. The other direction is obtained by reversing the order of the above inequalities and the two metrics are locally bi-Lipschitz. 
\end{proof}

Allessandrini, Liu, Papadopoulos and Su \cite{ALPS1} proved that $T(X_0)$ is not complete in the length spectrum metric 
 when there exists a sequence of simple closed geodesics on $X_0$ whose lengths converge to $0$. 
Thus, 
$\mathcal{X}:T(X_0)\to\mathbb{R}_{\geq 0}^{\mathcal{S}}$ is not a homeomorphism onto its image for the normalized supremum norm on $\mathbb{R}_{\geq 0}^{\mathcal{S}}$ and the Teichm\"uller metric on $T(X_0)$ when $X_0$ contains a sequence of simple closed geodesics whose lengths converge to zero.

Denote by
$$
\mathcal{PX}:T(X_0)\to P\mathbb{R}^{\mathcal{S}}_{\geq 0}
$$
the map from $T(X_0)$ into the projective space $P\mathbb{R}^{\mathcal{S}}_{\geq 0}=
(\mathbb{R}^{\mathcal{S}}_{\geq 0}-\{ \bar{0}\}) /\mathbb{R}_{>0}$.
The map $\mathcal{PX}$ is injective on $T(X_0)$. The {\it length spectrum Thurston's boundary} of $T(X_0)$ is, by the definition, the space of all limit points in $P\mathbb{R}^{\mathcal{S}}_{\geq 0}$ of the set $\mathcal{PX}(T(X_0))$ for the topology induced by the normalized supremum norm (c.f. \cite{FLP} for the original Thurston's discussion on closed surfaces). 

Note that a measured lamination $\mu$ on $X_0$ represent an element in $\mathbb{R}^{\mathcal{S}}_{\geq 0}$ by the formula
$$
\mu (\alpha )=i(\mu ,\alpha )
$$
for all $\alpha\in\mathcal{S}$, where $i(\mu ,\alpha )$ is the intersection number.

\begin{proposition}
\label{prop:p1}
Let $X_0$ be a geodesically complete infinite hyperbolic surface equipped with a geodesic pants 
decomposition. Then length spectrum Thurston's boundary of $T(X_0)$ contains 
the space of projective bounded measured lamination $PML_{bdd}(X_0)$ and it equals the closure of $PML_{bdd}(X_0)$ for the topology on $P\mathbb{R}_{\geq 0}^{\mathcal{S}}$ induced by the normalized supremum norm.
\end{proposition}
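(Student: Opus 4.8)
The plan is to reduce everything to the earthquake parametrization $EM\colon T(X_0)\to ML_{bdd}(X_0)$ and to a single two--sided length estimate. Writing $f\simeq E^{\mu}$ with $\mu=EM([f])$, the image of a lift of $\alpha$ under the left earthquake $E^{\mu}$ is a disjoint family of geodesic arcs whose total arc length is the part of $\alpha$ lying in the complement of $|\mu|$ (this is $\le l_{X_0}(\alpha)$, since $E^{\mu}$ is an isometry on each stratum) and whose consecutive arcs are separated by coherent left shears along the leaves totalling $i(\mu,\alpha)$. Closing up the gaps and comparing with the geodesic representative yields the estimate
\begin{equation*}
\big| l_{E^{\mu}(X_0)}(E^{\mu}(\alpha))-i(\mu,\alpha)\big|\le C\, l_{X_0}(\alpha),
\end{equation*}
with $C$ independent of $\mu$ and of $\alpha\in\mathcal{S}$; equivalently $\|\,\mathcal{X}([f])-i(\mu,\cdot)\,\|_{\infty}^{norm}\le C$ for every $[f]$. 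I would also record the comparability $\|\mu\|_{Th}\asymp\sup_{\alpha}i(\mu,\alpha)/l_{X_0}(\alpha)$ (one direction covers $\alpha$ by unit arcs, the other closes a unit arc into a simple closed geodesic of length comparable to $1$), so that $[\mu]\mapsto[\,i(\mu,\cdot)\,]$ embeds $PML_{bdd}(X_0)$ into $P\R^{\mathcal{S}}$ and sends nonzero classes to nonzero finite--norm functions.

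For the inclusion $PML_{bdd}(X_0)\subseteq\partial T(X_0)$, fix $\mu\in ML_{bdd}(X_0)$ and run the earthquake path $t\mapsto E^{t\mu}$. Since $i(t\mu,\cdot)=t\,i(\mu,\cdot)$, the displayed estimate applied to $t\mu$ gives
\begin{equation*}
\Big\|\tfrac1t\,\mathcal{X}([E^{t\mu}])-i(\mu,\cdot)\Big\|_{\infty}^{norm}
=\sup_{\alpha}\frac{\big|\tfrac1t\,l_{E^{t\mu}(X_0)}(E^{t\mu}(\alpha))-i(\mu,\alpha)\big|}{l_{X_0}(\alpha)}\le\frac{C}{t}\longrightarrow 0,
\end{equation*}
so $\mathcal{PX}([E^{t\mu}])\to[\mu]$ as $t\to\infty$. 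Hence every point of $PML_{bdd}(X_0)$ is a limit point of $\mathcal{PX}(T(X_0))$, i.e. belongs to the boundary. As the set of limit points of any subset of the Hausdorff space $P\R^{\mathcal{S}}$ is closed, the boundary then contains $\overline{PML_{bdd}(X_0)}$.

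Conversely, let $[\xi]$ be a boundary point and choose $[f_n]\in T(X_0)$ with $\mathcal{PX}([f_n])\to[\xi]$ and with the $[f_n]$ leaving every compact subset of $T(X_0)$; set $\mu_n=EM([f_n])$. Using the comparability above together with the uniform estimate, $\|\mathcal{X}([f_n])\|_{\infty}^{norm}$ is comparable to $\|\mu_n\|_{Th}$ up to the additive constant $C$, and I would argue that along such an escaping sequence $\|\mu_n\|_{Th}\to\infty$. Normalizing by $\|\mu_n\|_{Th}$ and invoking the estimate once more,
\begin{equation*}
\Big\|\tfrac{1}{\|\mu_n\|_{Th}}\,\mathcal{X}([f_n])-\tfrac{1}{\|\mu_n\|_{Th}}\,i(\mu_n,\cdot)\Big\|_{\infty}^{norm}\le\frac{C}{\|\mu_n\|_{Th}}\longrightarrow 0,
\end{equation*}
so $\mathcal{PX}([f_n])$ and $[\mu_n]\in PML_{bdd}(X_0)$ have the same projective limit and $[\xi]\in\overline{PML_{bdd}(X_0)}$. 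Combining the two inclusions gives $\partial T(X_0)=\overline{PML_{bdd}(X_0)}$.

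The main obstacle is the two--sided length estimate, and within it the lower bound: the upper bound follows from closing up the sheared arcs, but to bound $l_{E^{\mu}(X_0)}(E^{\mu}(\alpha))$ from below by $i(\mu,\alpha)-C\,l_{X_0}(\alpha)$ \emph{uniformly} over all $\alpha\in\mathcal{S}$ one must exploit that a left earthquake shears coherently, so that the shears cannot cancel along $\alpha$. A second delicate point, which the estimate alone does not settle, is the claim that a boundary point may be represented by a sequence with $\|\mu_n\|_{Th}\to\infty$: one must rule out that a norm--bounded, Teichm\"uller--escaping sequence contributes a frontier point, using the injectivity of $\mathcal{X}$, Proposition \ref{prop:two_metrics}, and completeness of $T(X_0)$ in the Teichm\"uller metric. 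I emphasize that Proposition \ref{prop:p1} only asserts equality with the \emph{closure}; whether $\overline{PML_{bdd}(X_0)}$ is strictly larger than $PML_{bdd}(X_0)$ is a separate issue, governed by the presence of arbitrarily short cuffs, and is the content of Theorem 1 rather than of this estimate.
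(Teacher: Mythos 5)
Your overall architecture matches the paper's: represent points of $T(X_0)$ by earthquakes, prove the uniform two-sided estimate $|l_{E^{t\mu}(X_0)}(E^{t\mu}(\alpha))-t\,i(\mu,\alpha)|\le C\,l_{X_0}(\alpha)$ (the paper gets $C=1$), and use it both to push $PML_{bdd}(X_0)$ into the boundary along earthquake paths and to pull boundary points back into the closure of $PML_{bdd}(X_0)$. But you leave the lower bound of that estimate --- which you yourself identify as the main obstacle --- as a heuristic. The paper proves it concretely: choosing the covering so that $B(z)=e^{-l_{X_0}(\alpha)}z$ corresponds to $\alpha$ and normalizing $E^{t\tilde{\mu}}$ to be the identity on the stratum $O$, the deformed covering transformation is $B^t=E^{t\tilde{\mu}}|_{O_1}\circ B$; a trace computation gives $2\cosh(l_t/2)\geq 2\cosh((m_t-l)/2)$, and subadditivity of translation lengths for coherently oriented hyperbolic translations with disjoint axes gives $m_t\geq t\,i(\mu,\alpha)$. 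That is the precise form of ``coherent shears cannot cancel''; without it your central inequality is an assertion, not a proof.

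The more serious problem is your claimed comparability $\|\mu\|_{Th}\asymp\sup_\alpha i(\mu,\alpha)/l_{X_0}(\alpha)$. Only the direction $\sup_\alpha i(\mu,\alpha)/l_{X_0}(\alpha)\leq 2\|\mu\|_{Th}$ holds for a general pants decomposition; the reverse (your ``close a unit arc into a simple closed geodesic of length comparable to $1$'') needs a lower bound on cuff lengths --- it is exactly what the proof of Proposition \ref{prop:p2} must work to establish under a bounded pants decomposition, and it fails in the setting of Proposition \ref{prop:p3}, where laminations of infinite Thurston norm have finite normalized supremum norm. This false step infects your converse direction: you normalize by $\|\mu_n\|_{Th}$ and conclude that $\mathcal{PX}([f_n])$ and $[\mu_n]$ have the same projective limit, but that inference requires $\|i(\mu_n,\cdot)\|_{\infty}^{norm}/\|\mu_n\|_{Th}$ to stay bounded away from zero; if both renormalized sequences tend to $0$ in norm, nothing follows about their projective limits. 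The paper avoids this by normalizing with scalars $t_n$ chosen so that $\frac{1}{t_n}\mathcal{X}([f_n])\to\sigma\neq 0$, writing $f_n$ as $E^{t_n'\mu_n}$ with $\|\mu_n\|_{Th}=1$, and transferring the estimate to obtain $\frac{t_n'}{t_n}\mu_n\to\sigma$ in the normalized supremum norm, which is what actually places $[\sigma]$ in the closure of $PML_{bdd}(X_0)$. (Your remark that the set of limit points is closed, needed to pass from the two inclusions to the stated equality, is a correct point that the paper leaves implicit.)
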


\begin{proof}
Let $\mu\in ML_{bdd}(X_0)$ be a bounded measured lamination on 
$X_0$. Denote by $E^{t\mu}$, for $t>0$, an earthquake path with the 
earthquake measure $t\mu$. Then $t\mapsto E^{t\mu}(X_0)$ is an 
analytic path in $T(X_0)$ because $\mu\in ML_{bdd}(X_0)$ (cf. \cite{Sa1}). Let $f_t$ be a 
quasiconformal map from $X_0$ to $X_t$ which is homotopic to $E^{t\mu}$.

For $\alpha\in\mathcal{S}$,  the inequality
$$
l_{f_t(X_0)}(f_t(\alpha ))\leq t i(\mu ,\alpha )+l_{X_0}(\alpha )
$$
implies that
\begin{equation}
\label{eq:up_bd}
\frac{\frac{1}{t}\mathcal{X}([f_t])(\alpha )-i(\mu ,\alpha )}{l_{X_0}(\alpha )}\leq\frac{1}{t}
\end{equation}
for all $\alpha\in\mathcal{S}$ and all $t>0$. 

To obtain the opposite inequality, we choose the universal covering of $X_0$ such that $B(z)=e^{-l_{X_0}(\alpha )}z$ is a cover transformation corresponding to $\alpha$. Let $O$ be the stratum of the lift $\tilde{\mu}$ of $\mu$ to the universal covering $\mathbb{H}$ that contains $e^{l_{X_0}(\alpha )}i$, and let $O_1$ be the stratum of $\tilde{\mu}$ that contains $i$. Normalize the earthquake $E^{t\tilde{\mu}}$ such that $E^{t\tilde{\mu}}|_{O}=id$. Then
$$
B^t=E^{t\tilde{\mu}}|_{O_1}\circ B
$$
is a covering transformation that corresponds to the geodesic on $f_t(X_0)$ homotopic to $f_t(\alpha )$ (cf. \cite{EpMar}). Denote by $l_t$ the translation length of $B_t$ and $l=l_{X_0}(\alpha )$ the translation length of $B$. Let $k_1<0$ and $k_2>0$ be the endpoints of the hyperbolic translation $E^{t\tilde{\mu}}|_{O_1}$, and let $m_t$ be its translation length (cf. Figure 1).


\begin{figure}[h]
\leavevmode \SetLabels
\L(.48*.78) $e^{l_{X_0}(\alpha )}i$\\
\L(.48*.27) $i$\\
\L(.65*.6) $O$\\
\L(.35*.15) $O_1$\\
\L(.16*.03) $k_1$\\
\L(.53*.03) $k_2$\\
\endSetLabels
\begin{center}
\AffixLabels{\centerline{\epsfig{file =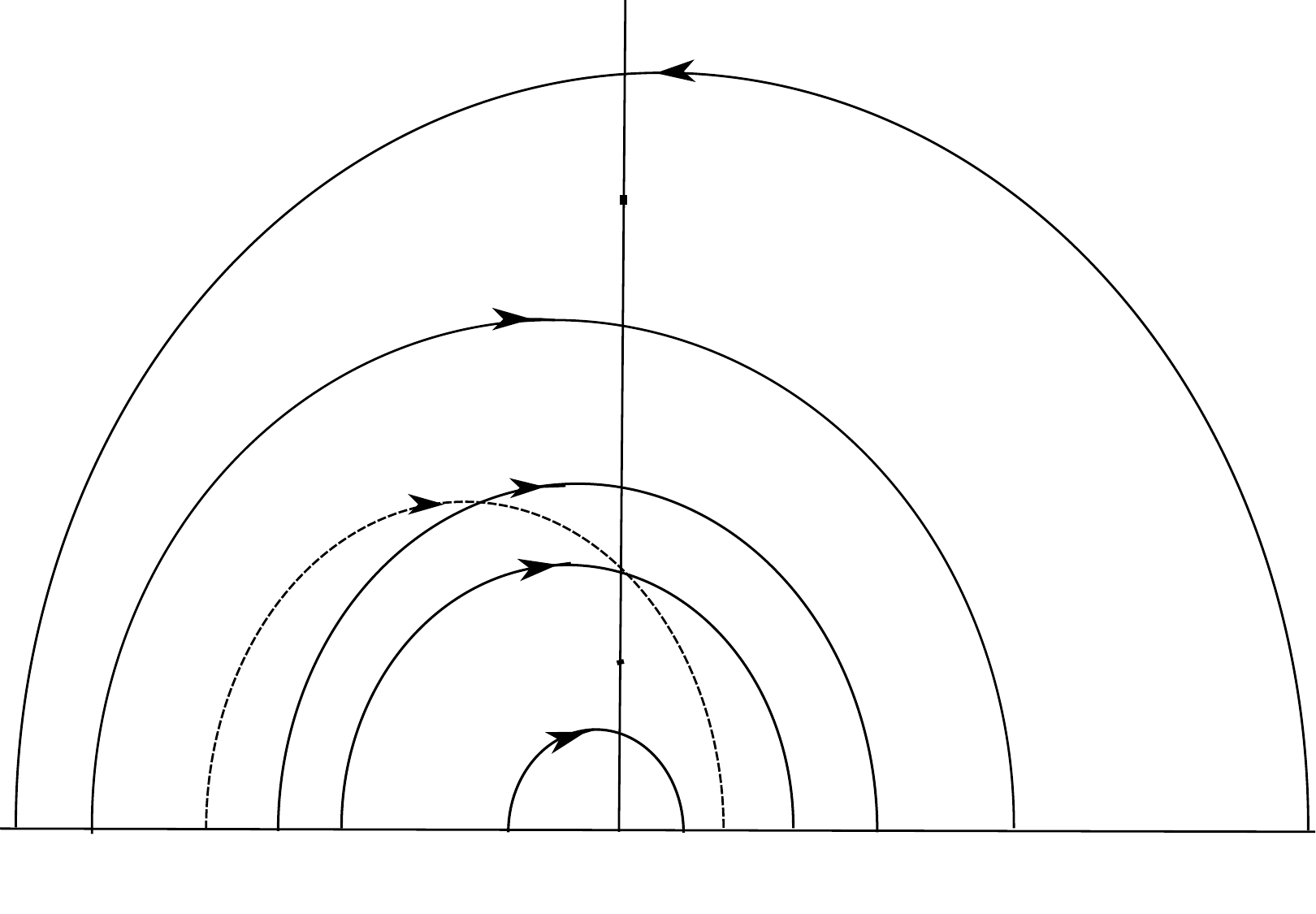,width=12.0cm,angle=0} }}
\vspace{-30pt}
\end{center}
\caption{Computing $E^{t\tilde{\mu}}|_{O_1}$} \label{fig:vertslits}
\end{figure}

 A direct computation (cf. \cite{Sar3}) gives
\begin{equation*}
trace(B^t)=2\cosh \frac{m_t-l}{2}-\frac{2k_1}{k_2-k_1}\Big{(}\cosh\frac{m_t+l}{2}-\cosh\frac{m_t-l}{2}\Big{)}
\end{equation*}
Consequently
$$
2\cosh\frac{l_t}{2}=trace(B^t)\geq 2\cosh \frac{m_t-l}{2}
$$
which implies 
$$
l_t\geq m_t-l.
$$
Since the translation length of a composition of two hyperbolic translations (with non-intersecting axis and translating in the same direction) is at least as large as the sum of their translation lengths (cf. \cite{Th1}), it follows that
$$
m_t\geq t i(\mu ,\alpha).
$$
The above two inequalities give
$$
\frac{1}{t}\frac{l_t}{l}\geq\frac{i(\mu ,\alpha )}{l}-\frac{1}{t}
$$
which implies
\begin{equation}
\label{eq:lw_bd}
\frac{1}{t}\frac{\mathcal{X}([f_t])(\alpha )}{l_{X_0}(\alpha )}- \frac{i(\mu ,\alpha )}{l_{X_0}(\alpha )}\geq-\frac{1}{t}.
\end{equation}
Then equations (\ref{eq:up_bd}) and (\ref{eq:lw_bd}) give that, uniformly in $\alpha\in\mathcal{S}$,
$$
\lim_{t\to\infty}\frac{1}{t}\frac{\mathcal{X}([f_t])(\alpha )}{l_{X_0}(\alpha )}= \frac{i(\mu ,\alpha )}{l_{X_0}(\alpha )}.
$$
We established that each point in $PML_{bdd}(X_0)$ is in  Thurston's boundary.

\vskip .2 cm

Let $\sigma\in \mathbb{R}_{\geq 0}^{\mathcal{S}}$ be such that its projective class $[\sigma ]$ is in length spectrum Thurston's boundary. We need to establish that $[\sigma ]$ is in the closure of $PML_{bdd}(X_0)$ for the normalized supremum norm. 

There exists a sequence $[f_n]\in T(X_0)$ that converges to the projective class $[\sigma ]\in P\mathbb{R}^{\mathcal{S}}_{\geq 0}$. 
Let $t_n\to\infty$ as $n\to\infty$ be such that $\frac{1}{t_n}\mathcal{X}([f_n])\to\sigma$ as $n\to\infty$ in the normalized supremum norm. Necessarily we have $\sup_n\|\frac{1}{t_n}\mathcal{X}([f_n])\|_{\infty}^{norm}<\infty$.

Let $f_n$ be represented by a sequence of earthquakes $E^{t_n'\mu_n}$ with $\|\mu_n\|_{Th}=1$ and $t_n'>0$. Then $t_n'\to\infty$ as $n\to\infty$ and the first part of the proof gives
$$
\|\frac{1}{t_n'}\mathcal{X}([f_n])-\mu_n\|_{\infty}^{norm}<\frac{1}{t_n'}.
$$ 
Note that if $\|\mu_n\|_{Th}=1$ then $\|\mu_n\|_{\infty}^{norm}\leq 2$. Then the above inequality implies that $\|\frac{1}{t_n'}\mathcal{X}([f_n])\|_{\infty}^{norm}\leq 3$ for all $t_n'$ with $n$ large enough and the sequence $\frac{t_n'}{t_n}$ is bounded from the above and below by positive numbers. By choosing a subsequence, if necessary, we can assume that $\frac{t_n'}{t_n}\to c>0$ as $n\to\infty$. It follows that, as $n\to\infty$,
$$
\|\frac{1}{t_n}\mathcal{X}([f_n])-c\mu_n\|_{\infty}^{norm}\to 0
$$
which implies
$$
\| c\mu_n-\sigma\|_{\infty}^{norm}\to 0
$$
and the proof is completed.
\end{proof}

\subsection{Infinite surfaces with bounded geodesic pants decompositions}

We consider a hyperbolic surface $X_0$ which can be decomposed into geodesic pairs of pants with cuffs $\{ \alpha_n\}_{n\in\mathbb{N}}$ such that
$$
1/M\leq l_{X_0}(\alpha_n) \leq M
$$
for some $M>1$ and for all $n\in\mathbb{N}$. We say that such $X_0$ has a {\it bounded geodesic pants decomposition}. The next proposition establishes that length spectrum Thurston's boundary coincides with  Thurston's boundary for $T(X_0)$ introduced using the geodesic currents (cf. \cite{Sar5}).

\begin{proposition}
\label{prop:p2}
Let $X_0$ be a geodesically complete infinite area hyperbolic surface with bounded geodesic pants decomposition. Then length spectrum Thurston's boundary is equal to the space of projective bounded measured laminations $PML_{bdd}(X_0)$ on $X_0$. 
\end{proposition}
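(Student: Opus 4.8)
The plan is to combine the preceding proposition with the special geometry of a bounded decomposition. Proposition \ref{prop:p1} already shows that length spectrum Thurston's boundary is the closure of $PML_{bdd}(X_0)$ for the normalized supremum norm, so it suffices to prove that under the hypothesis $1/M\le l_{X_0}(\alpha_n)\le M$ the set $PML_{bdd}(X_0)$ is itself \emph{closed} in $P\mathbb{R}^{\mathcal{S}}$. Two consequences of the bounded decomposition will be used. First, the upper bound gives each cuff a bounded length and lets one choose, for every $n$, a dual simple closed curve $\beta_n$ crossing $\alpha_n$ of length at most a constant $L_0=L_0(M)$. Second, the lower bound together with the collar lemma forces a positive systole: the only simple closed geodesics are the cuffs (length $\ge 1/M$) and curves crossing some collar, whose width is bounded below in terms of $M$, so $l_{X_0}(\delta)\ge s_0>0$ for every $\delta\in\mathcal{S}$.

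The key lemma I would prove is that on $ML_{bdd}(X_0)$ the Thurston norm and the normalized supremum norm are bi-Lipschitz equivalent. For one direction, covering a simple closed geodesic $\delta$ by $\lceil l_{X_0}(\delta)\rceil$ subarcs of length at most $1$ gives $i(\mu,\delta)\le (l_{X_0}(\delta)+1)\|\mu\|_{Th}$, and dividing by $l_{X_0}(\delta)\ge s_0$ yields $\|\mu\|_\infty^{norm}\le (1+1/s_0)\|\mu\|_{Th}$. For the reverse direction, a unit arc $J$ lies in the union $W$ of the at most two pairs of pants it meets; every leaf of $|\mu|$ crossing $J$ must exit $W$ through one of its (at most six) boundary cuffs, so $i(\mu,J)\le i(\mu,\partial W)=\sum_{\alpha\subset\partial W}i(\mu,\alpha)$, and since each such cuff has length at most $M$ we get $i(\mu,\alpha)\le M\,\|\mu\|_\infty^{norm}$ and hence $\|\mu\|_{Th}\le 6M\,\|\mu\|_\infty^{norm}$. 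Thus the two norms define the same topology on $ML_{bdd}(X_0)$, and a bound on one is a bound on the other.

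Now let $[\sigma]$ lie in the closure of $PML_{bdd}(X_0)$, with $\sigma$ a representative of finite normalized supremum norm. By definition of the quotient topology there are measured laminations $\mu_m\in ML_{bdd}(X_0)$ and scalars $c_m>0$ with $\nu_m:=c_m\mu_m\to\sigma$ in the normalized supremum norm; in particular $\sup_m\|\nu_m\|_\infty^{norm}<\infty$, and the key lemma gives $\sup_m\|\nu_m\|_{Th}<\infty$. Exhaust $X_0$ by finite unions of pants $Y_1\subset Y_2\subset\cdots$. On each $Y_k$ the Dehn--Thurston coordinates of $\nu_m|_{Y_k}$ are uniformly bounded in $m$: the weights $i(\nu_m,\alpha_n)\le (M+1)\|\nu_m\|_{Th}$ are bounded, and the twist about each cuff $\alpha_n$ is controlled by the intersection number $i(\nu_m,\beta_n)\le L_0\,\sup_m\|\nu_m\|_\infty^{norm}$ with its bounded-length dual curve. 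A diagonal extraction over $k$ then produces a measured lamination $\nu$ on $X_0$ with $\nu_m\to\nu$ on every $Y_k$, and lower semicontinuity of the Thurston norm gives $\|\nu\|_{Th}\le\limsup_m\|\nu_m\|_{Th}<\infty$, so $\nu\in ML_{bdd}(X_0)$.

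Finally I would identify $\nu$ with $\sigma$. For each fixed $\delta\in\mathcal{S}$ the number $i(\cdot,\delta)$ depends only on the finitely many pants that $\delta$ crosses, so convergence on the corresponding $Y_k$ gives $i(\nu_m,\delta)\to i(\nu,\delta)$; on the other hand $\nu_m\to\sigma$ in the normalized supremum norm forces $i(\nu_m,\delta)=\nu_m(\delta)\to\sigma(\delta)$. Hence $i(\nu,\delta)=\sigma(\delta)$ for every $\delta\in\mathcal{S}$, that is $[\nu]=[\sigma]$, so $[\sigma]\in PML_{bdd}(X_0)$ and $PML_{bdd}(X_0)$ is closed. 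The step I expect to be the main obstacle is the diagonal extraction: one must control the twist parameters uniformly and deal with the cuffs $\alpha_n$ along which $i(\nu_m,\alpha_n)\to 0$, where the twist is not well defined yet the corresponding sublamination simply disappears in the limit. This is exactly where the lower bound $l_{X_0}(\alpha_n)\ge 1/M$ (through the positive systole) and the upper bound (through the bounded-length dual curves) are both indispensable.
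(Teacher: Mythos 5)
Your overall architecture is sound and in fact parallels the paper's: reduce, via Proposition \ref{prop:p1}, to showing that a uniform bound on the normalized supremum norm forces a uniform bound on the Thurston norm, then extract a weak* limit in $ML_{bdd}(X_0)$ and identify it with $\sigma$. The paper does exactly this (working with the earthquake measures $\beta_k$ of the approximating points $[f_k]$ rather than with approximating laminations, which is an inessential difference). But the one step that carries all the content --- the inequality $\|\mu\|_{Th}\leq C(M)\,\|\mu\|_{\infty}^{norm}$ --- is where your argument breaks down.

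The claim that ``every leaf of $|\mu|$ crossing $J$ must exit $W$ through one of its boundary cuffs, so $i(\mu,J)\le i(\mu,\partial W)$'' is false when $W$ is a union of pairs of pants. A leaf can be a simple closed geodesic contained in the interior of $W$ (for instance the cuff $\alpha_n$ shared by the two pairs of pants, or any of the infinitely many essential simple closed geodesics of the four-holed sphere $W$), or it can spiral onto such a closed geodesic; such leaves meet $J$ but never meet $\partial W$. Concretely, for $\mu=w\cdot\alpha_n$ and $J$ a unit arc crossing $\alpha_n$ one has $i(\mu,J)\geq w$ while $i(\mu,\partial W)=0$, so the proposed bound $\|\mu\|_{Th}\le 6M\|\mu\|_{\infty}^{norm}$ is not established by this argument (the conclusion happens to survive for this example via the dual curve $\beta_n$, but your lemma's proof does not see that). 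This is precisely why the paper does not cut along cuffs alone: it adjoins to each cuff $\alpha_{i_j}$ a bounded-length dual geodesic $\alpha_{i_j}^{*}$ so that the complement $X_0-\cup_{i,j}(\alpha_{i_j}\cup\alpha_{i_j}^{*})$ consists of uniformly bounded \emph{simply connected} polygons; simple connectivity is what guarantees that no leaf can stay inside a piece, so the mass crossing a piece is controlled by $i(\mu,\alpha_{i_j})$ and $i(\mu,\alpha_{i_j}^{*})$, all of which are bounded by $C(M)\|\mu\|_{\infty}^{norm}$ since these curves have length bounded in terms of $M$. You already introduced the dual curves $\beta_n$ in your first paragraph but then did not use them in the lemma; routing the argument through the simply connected decomposition repairs the gap. (A second, minor, inaccuracy: a unit arc need not lie in at most two pairs of pants; the collar lemma only gives that it meets a number of pants bounded in terms of $M$, which is all you need.) The remaining steps --- diagonal extraction with control of twists via the dual curves, and identification of the limit with $\sigma$ through intersection numbers --- are in line with the paper's use of the uniform weak* topology.
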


\begin{proof}
Consider a sequence of points $[f_k]\in T(X_0)$ that converge to (the projective class of) $L^{*}\in\mathbb{R}^{\mathcal{S}}_{\geq 0}$ 
in length spectrum Thurston's boundary of $T(X_0)$.
Let $t_k\to\infty$ as $k\to\infty
$ be such that $\frac{1}{t_k}\mathcal{X}([f_k])\to L^{*}$ in $\mathbb{R}
_{\geq 0}^{\mathcal{S}}-\{\bar{0}\}$, where $\bar{0}(\alpha )=0$ for all $\alpha\in\mathcal{S}$. Let $E^{t_k\beta_k}$ be a sequence of earthquakes
of $\mathbb{H}$ such that $E^{t_k\beta_k}|_{S^1}=f_k$, where $\|\beta_k\|
_{Th}<\infty$ (cf. \cite{Th1}).

The proof of the above proposition gives
\begin{equation}
\label{eqn:ls-ineq}
\Big{|}\frac{1}{t_k}\frac{\mathcal{X}(E^{t_k\beta_k})(\alpha )}{l_{X_0}(\alpha )}-\frac{i(\beta_k,\alpha )}{l_{X_0}(\alpha )}
\Big{|}\leq \frac{1}{t_k}
\end{equation}
for all $\alpha\in\mathcal{S}$. 

Since $\frac{1}{t_k}\mathcal{X}([f_k])\to L^{*}$, the above inequality implies
$$
\Big{|}\frac{i(\beta_k,\alpha )}{l_{X_0}(\alpha )}-
\frac{L^{*}(\alpha )}{l_{X_0}(\alpha )} \Big{|}\rightarrow 0
$$
as $k\to\infty$ uniformly in $\alpha\in\mathcal{S}$. 
Define
$$
\|\beta\|_{ls}=\sup_{\alpha\in\mathcal{S}}\frac{i(\beta ,\alpha )}{l_{X_0}(\alpha )}
$$
for any $\beta\in ML_{bdd}(X_0)$.
The above convergence gives
$$
\sup_{k\in\mathbb{N}}\|\beta_k\|_{ls}=N<\infty.
$$

We use the assumption that $X_0$ has a bounded geodesic pants decomposition in order to prove that $\|\beta_k\|_{Th}$ is bounded in $k$. Indeed, let $\{\alpha_n\}_{n\in\mathbb{N}}$ be cuffs of a geodesic pants decomposition $\mathcal{P}$ of $X_0$ such that there exists $M>1$ with
$$
\frac{1}{M}\leq l_{X_0}(\alpha_n )\leq M $$
for all $n\in\mathbb{N}$, where $\{\alpha_n\}_n$ are cuffs of $\mathcal{P}$.
Let $P^i$ be a geodesic pair of pants in the above decomposition with the cuffs $\alpha_{i_j}$, for $j=1,2,3$. Assume that $\alpha_{i_j}$, for $j=1,2,3$ are different geodesics of $X_0$. Denote by $P_{i_j}$, $j=1,2,3$, adjacent pair of pants to $P^i$ with common cuff $\alpha_{i_j}$. Then there exists a simple closed geodesic $\alpha_{i_j}^{*}$ in $P_{i_j}\cup P^i$ that intersects $\alpha_{i_j}$ in two points such that $l_{X_0}(\alpha_{i_j}^*)$ is bounded from the above and below by positive constants depending only on $M>0$. The components of $P^i-\cup_{j=1}^3(\alpha_{i_j}\cup\alpha_{i_j}^{*})$ are simply connected for each $i$. If two of $\alpha_{i_j}$, for $j=1,2,3$ is the same geodesic then a similar construction yields $\alpha_{i_j}^{*}$ such that components of $P_i-\cup_{j=1}^3(\alpha_{i_j}\cup\alpha_{i_j}^{*})$ are simply connected and that $l_{X_0}(\alpha_{i_j}^{*})$ is bounded in terms of $M$.


\begin{figure}[h]
\leavevmode \SetLabels
\L(.25*.25) $P_{i_2}$\\
\L(.78*.2) $P_{i_3}$\\
\L(.36*.3) $\alpha_{i_2}$\\
\L(.66*.38) $\alpha_{i_3}$\\
\L(.7*.5) $P^i$\\
\L(.43*.7) $\alpha_{i_1}$\\
\L(.7*.8) $P_{i_1}$\\
\L(.6*.8) $\alpha^{*}_{i_1}$\\
\endSetLabels
\begin{center}
\AffixLabels{\centerline{\epsfig{file =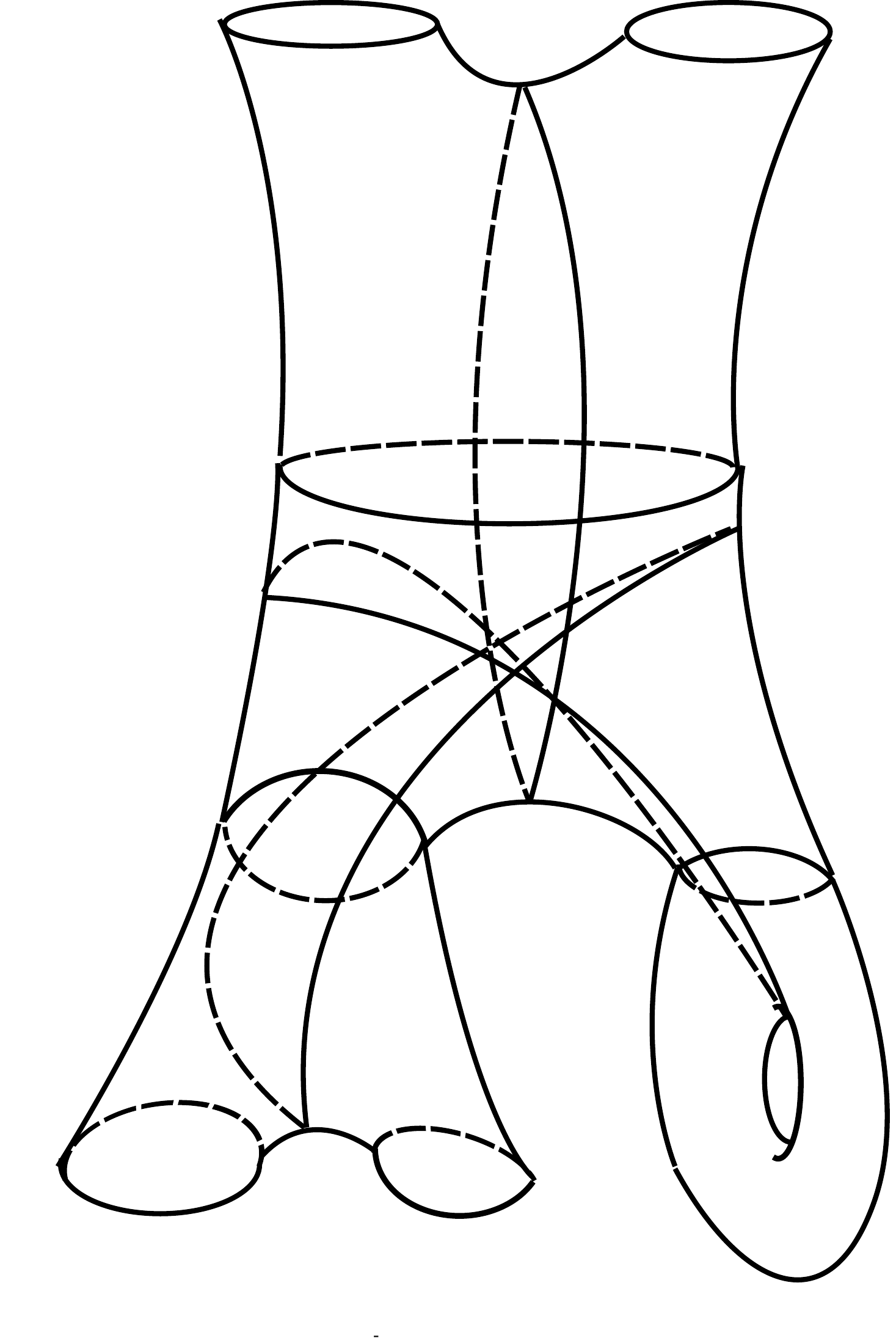,width=7.0cm,angle=0} }}
\vspace{-30pt}
\end{center}
\caption{Decomposition of $X_0$ into bounded polygons.} \label{}
\end{figure}

The above convergence of $\beta_k$ to $L^{*}$ and boundedness of the lengths of $\alpha_{i_j}$ and $\alpha_{i_j}^{*}$ on $X_0$ imply that
$$
i(\beta_k ,\alpha_{i_j}),i(\beta_k ,\alpha_{i_j}^{*})<C(M)
$$
for some constant $C=C(M)$ and for all $i,k\in\mathbb{N}$ and $j=1,2,3$. 
Since $X_0-\cup_i\cup_{j=1}^3\{\alpha_{i_j},\alpha_{i_j}^{*}\}$ has simply connected and uniformly bounded components (that are polygons with at most six sides) whose boundaries are subarcs of $\alpha_{i_j},\alpha_{i_j}^{*}$, we conclude that the supremum over all $k$ and over all above components of the $\beta_k$-mass of the geodesics intersecting components is finite. Since each geodesic arc of length $1$ on $X_0$ can intersect at most finitely many components of $X_0-\cup_i\cup_{j=1}^3\{\alpha_{i_j},\alpha_{i_j}^{*}\}$, it follows that
 $\sup_{k\in\mathbb{N}}\|\beta_k\|_{Th}<\infty$.

By $\sup_{k\in\mathbb{N}}\|\beta_k\|_{Th}<\infty$, there exists a subsequence $\beta_{k_j}$ and $\beta^{*}\in ML_{bdd}(X_0)$ such that $\beta_{k_j}\to\beta^{*}$ as $j\to\infty$ in the weak* topology. (The weak* topology is described in terms of the lifts of the measured laminations $\beta_k$ to the universal covering $\mathbb{H}$.)
Then 
$$
L^{*}(\alpha )=\beta^{*}(\alpha )
$$
for all $\alpha\in\mathcal{S}$ and 
$$
\|\beta^{*}\|_{Th}<\infty.
$$

Thus any point in length spectrum Thurston's boundary is in $PML_{bdd}(X_0)$. The above proposition gives that all points in $PML_{bdd}(X_0)$ are also in length spectrum Thurston's boundary for $T(X_0)$.
\end{proof}

\subsection{Infinite hyperbolic surfaces with upper bounded geodesic pants decompositions}
\label{sec:upper}

Let $X_0$ be a geodesically complete infinite area hyperbolic surface with a geodesic pants decomposition $\mathcal{P}=\{\alpha_n\}_{n\in\mathbb{N}}$ such that $$\sup_n l_{X_0}(\alpha_n)=M<\infty .$$ 
In addition, we assume that there exists a subsequence $\{\alpha_{n_j}\}_j$ with $l_{X_0}(\alpha_{n_j})\to 0$ as $j\to\infty$. Let $P_{n}^1$ and $P_{n}^2$ be the geodesic pairs of pants in $\mathcal{P}$ with a common cuff $\alpha_n$ (possibly $P_n^1=P_n^2$). 
Let $\gamma_n$ be a shortest closed geodesic in $P_{n}^1\cup P_{n}^2$ that intersects $\alpha_n$ in either one point (when $P_n^1=P_n^2$) or in two points (when $P_n^1\neq P_n^2$). We have that (cf. \cite{ALPS})
$$
\frac{l_{X_0}(\gamma_n)}{\max\{ 1,|\log l_{X_0}(\alpha_n)|\}}=O(1) ,
$$
where $O(1)$ is a function pinched between two positive constants.

\begin{proposition}
\label{prop:p3}
Let $X_0$ be a geodesically complete infinite area hyperbolic surface with an upper bounded
geodesic pants decomposition $\mathcal{P}=\{\alpha_n\}_{n\in\mathbb{N}}$ such that a subsequence of cuffs $\alpha_{n_j}$ has lengths going to zero. Then length spectrum Thurston's boundary
of $T(X_0)$ is strictly larger than $PML_{bdd}(X_0)$.
\end{proposition}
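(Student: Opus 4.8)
The plan is to invoke Proposition \ref{prop:p1}, which identifies length spectrum Thurston's boundary with the closure $\overline{PML_{bdd}(X_0)}$ in $P\mathbb{R}^{\mathcal{S}}$. Since the first part of that proposition already gives $PML_{bdd}(X_0)\subseteq$ boundary, the assertion is equivalent to saying that $PML_{bdd}(X_0)$ is \emph{not closed}, so it suffices to produce one projective class $[\sigma]$ that is a limit (in the normalized supremum norm) of a sequence from $PML_{bdd}(X_0)$ but is not represented by any bounded measured lamination. I would build $[\sigma]$ from the short cuffs. Let $w_j$ be the half-width of the standard collar about $\alpha_{n_j}$, so $\sinh w_j=1/\sinh\big(l_{X_0}(\alpha_{n_j})/2\big)$ and hence $w_j\to\infty$ as $l_{X_0}(\alpha_{n_j})\to 0$ (only the short subsequence is used; the upper bound $M$ plays no role). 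I choose weights $c_j$ with $c_j\to\infty$ but $c_j/w_j\to 0$, for instance $c_j=\sqrt{w_j}$, and set $\mu=\sum_j c_j\,\alpha_{n_j}$. Because the collars are pairwise disjoint with area bounded below by a universal constant, the family $\{\alpha_{n_j}\}$ is locally finite, so $\mu$ is a genuine measured geodesic lamination and its truncations $\mu_N=\sum_{j\le N}c_j\,\alpha_{n_j}$ lie in $ML_{bdd}(X_0)$.

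The crucial geometric input is the collar length bound: for any $\beta\in\mathcal{S}$, each of the $i(\alpha_{n_j},\beta)$ crossings of $\alpha_{n_j}$ forces $\beta$ to traverse the collar, so disjointness of collars yields $l_{X_0}(\beta)\ge\sum_j 2w_j\,i(\alpha_{n_j},\beta)$. Combining this with the elementary inequality $\frac{\sum_j a_jx_j}{\sum_j a_jy_j}\le\sup_j x_j/y_j$ (valid for $a_j\ge 0$, $y_j>0$), applied with $a_j=i(\alpha_{n_j},\beta)$, $x_j=c_j$, $y_j=2w_j$, gives
$$
\Big\|\mu-\mu_N\Big\|_{\infty}^{norm}=\sup_{\beta\in\mathcal{S}}\frac{\sum_{j>N}c_j\,i(\alpha_{n_j},\beta)}{l_{X_0}(\beta)}\le \sup_{j>N}\frac{c_j}{2w_j}\longrightarrow 0
$$
as $N\to\infty$, since $c_j/w_j\to 0$. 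Taking $N=0$ shows in particular that $\|\mu\|_{\infty}^{norm}<\infty$, so $\sigma:=i(\mu,\cdot)$ is a nonzero element of $\mathbb{R}^{\mathcal{S}}$; and since the projection $\mathbb{R}^{\mathcal{S}}\to P\mathbb{R}^{\mathcal{S}}$ is continuous, $[\mu_N]\to[\sigma]$. Hence $[\sigma]\in\overline{PML_{bdd}(X_0)}$.

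It remains to check $[\sigma]\notin PML_{bdd}(X_0)$, and this is exactly where the hypothesis $l_{X_0}(\alpha_{n_j})\to 0$ is decisive. For all large $j$ we have $w_j>1$, so there is a geodesic arc of length $1$ lying entirely in the collar of $\alpha_{n_j}$ that meets $\alpha_{n_j}$ exactly once and meets no other cuff; this arc witnesses $\|\mu\|_{Th}\ge c_j$, whence $\|\mu\|_{Th}=\infty$ and $\mu\notin ML_{bdd}(X_0)$. Since a measured geodesic lamination is determined by its intersection numbers with simple closed geodesics, any $\mu'\in ML_{bdd}(X_0)$ with $[i(\mu',\cdot)]=[\sigma]$ would satisfy $\mu=\lambda\mu'$ for some $\lambda>0$ and therefore $\|\mu\|_{Th}=\lambda\|\mu'\|_{Th}<\infty$, a contradiction. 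Thus $[\sigma]\in\overline{PML_{bdd}(X_0)}\setminus PML_{bdd}(X_0)$, and by Proposition \ref{prop:p1} the length spectrum Thurston's boundary is strictly larger than $PML_{bdd}(X_0)$.

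The main obstacle is the balancing of the weights $c_j$: finiteness of $\|\mu\|_{\infty}^{norm}$ together with vanishing of the tails forces $c_j/w_j\to 0$, while infiniteness of $\|\mu\|_{Th}$ forces $c_j\to\infty$, and these two demands are reconcilable precisely because $w_j\approx|\log l_{X_0}(\alpha_{n_j})|\to\infty$. The collar length lower bound is what converts the normalized supremum norm into a controllable weighted average, and it is the engine behind both the convergence estimate and the finiteness of $\|\mu\|_{\infty}^{norm}$. The remaining points—local finiteness of $\{\alpha_{n_j}\}$ so that $\mu$ is a lamination, finiteness of $\|\mu_N\|_{Th}$, and the fact that intersection numbers determine a measured lamination—are routine and can be disposed of quickly.
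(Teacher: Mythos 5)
Your proof is correct, and it reaches the conclusion by a genuinely different route than the paper, even though the witness is essentially the same object. The paper also takes $\mu=\sum_j w_j\alpha_{n_j}$ with unbounded weights $w_j=o(|\log l_{X_0}(\alpha_{n_j})|)$, but it certifies that $[i(\mu,\cdot)]$ lies in the boundary dynamically: it invokes the Fenchel--Nielsen description of the closure of $T(X_0)$ for upper-bounded pants decompositions (from \cite{Sar}) to see that the surfaces $X^t=E^{t\mu}(X_0)$ lie in the length-spectrum closure of $T(X_0)$, extends the earthquake estimate of Proposition \ref{prop:p1} to this unbounded $\mu$ to get $\frac{1}{t}\mathcal{X}(X^t)\to i(\mu,\cdot)$, and uses that each $X^t$ is a limit of points of $T(X_0)$. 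You instead stay entirely inside Proposition \ref{prop:p1}: since the boundary equals $\overline{PML_{bdd}(X_0)}$, it suffices that $PML_{bdd}(X_0)$ fail to be closed, and you exhibit this by approximating $\mu$ by its truncations $\mu_N$ and controlling $\|\mu-\mu_N\|_{\infty}^{norm}$ via the collar lemma ($l_{X_0}(\beta)\ge\sum_j 2w_j\,i(\alpha_{n_j},\beta)$, which is correct since the standard collars are pairwise disjoint and each component of $\beta$ crossing a collar meets the core once and has length at least $2w_j$). This is more elementary and self-contained --- no earthquakes along unbounded laminations, no Fenchel--Nielsen coordinates on the closure --- and, as you note, it never uses the upper bound $M$ on cuff lengths, so it proves slightly more than is stated. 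The one step deserving a word of care is the last: to exclude $[\sigma]$ from $PML_{bdd}(X_0)$ you assert that a measured lamination is determined by its intersection numbers with simple closed geodesics, which on an infinite surface is not immediate; however, the paper relies on exactly the same identification of $PML_{bdd}(X_0)$ with its image in $P\mathbb{R}^{\mathcal{S}}$ (both here and in \S\ref{sec:twosurfaces}), so this is a shared implicit hypothesis rather than a gap specific to your argument.
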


\begin{proof}

We use the description of the closure of $T(X_0)$ in the Fenchel-Nielsen coordinates for the pants decomposition $\mathcal{P}=\{ \alpha_n\}_{n\in\mathbb{N}}$. Namely, a marked surface $f:X_0\to X$ is in $T(X_0)$ if and only if its  Fenchel-Nielsen coordinates $\{ (\frac{l_X(\alpha_n)}{l_{X_0}(\alpha_n)},t_X(\alpha_n))\}_{n\in
\mathbb{N}}$ are uniformly bounded;  $f:X_0\to X$ is in the closure of $T(X_0)$ if and only if $\{\frac{l_X(\alpha_n)}{l_{X_0}(\alpha_n)}\}_n$ is bounded and $|t_X(\alpha_n)|=o(\max\{ 1,|\log l_{X_0}(\alpha_n)|\} )$ for all $n$ (cf. \cite{Sar}).

Define a measured lamination $\mu =\sum_j w_j\alpha_{n_j}$ for some $w_j=o(|\log l_{X_0}(\alpha_{n_j})|)$. Then $\mu$ is not Thurston bounded and $E^{t\mu}(X_0)=X^t$ is in the closure of $T(X_0)$ for the length spectrum metric (cf. \cite{Sar}). The proof of Proposition \ref{prop:p1} extends to $\mu$ to get $\frac{1}{t} \mathcal{X}(X^t)\to \mu$ as $t\to\infty$ in the normalised supremum norm. Since each $X^t$ is a limit of points in $T(X_0)$, it follows that $\mu$ is in Thurston's boundary and the proof is competed.
\end{proof}

Theorem 1 from Introduction is established by Propositions \ref{prop:p1}, \ref{prop:p2} and \ref{prop:p3}.

\subsection{Two infinite surfaces with unbounded geodesic pants decompositions}
\label{sec:twosurfaces}

The first surface $X_1$ that we consider is introduced by Kinjo \cite{Kin}. Let 
$\Gamma'$ be the hyperbolic triangle group of signature $(2,4,8)$. Let 
$T'$ be the triangle fundamental polygon for $\Gamma'$ with angles $
\pi /2$, $\pi /4$ and $\pi /8$. Then $\Gamma' (T')$ tiles the hyperbolic 
plane $\mathbb{H}$. Let $T$ be the union of $T'$ and $\gamma'_0 (T')$, where $\gamma'_0\in\Gamma'$ is a reflection in the geodesic containing the side of $T'$ which subtends the angles $\pi /2$ and $\pi /8$ of $T'$. Denote the vertices of $T$ by $a$, $b$ and $c$; the vertex $b$ is 
where $T'$ has angle $\pi /8$ (cf. \cite[Figure 2]{Kin}). We choose three points $a'$, $b'$ and $c'$ close to $a$, $b$ and $c$, 
respectively, in the interior of the triangle $T$ such that $b'$ is on the side of $T'$ containing $b$. The surface $X_1$ is obtained by 
puncturing the hyperbolic plane at the points $\Gamma'\{ a',b',c'\}$ (cf. \cite[Figures 2,3]{Kin}). Kinjo \cite{Kin} proved that the Teichm\"uller 
space $T(X_1)$ is complete in the length spectrum metric.

Let $\{\gamma_i\}_{i=1,\ldots ,8}$ be the elements of $\Gamma'$ that 
fix $a$. Let $l_a$ be the simple closed geodesic which separates the 
eight points $\{\gamma_i(a)\}_{i=1,\ldots ,8}$ from the other punctures 
of $X_1$. We similarly define curves $l_b$ and $l_c$, and then extend 
the definition using $\Gamma'$ to all other groups of eight cusps. The lengths of all $\Gamma'(l_a)$ are the same, as well as the 
lengths of all $\Gamma'(l_b)$, as well as the lengths of all $\Gamma'(l_c)$.

For the triangle $T$, we denote by $l_{a',b'}$ the simple closed geodesic which is homotopic to a simple closed curve in $T$ that separates $a',b'$ from $c'$. We similarly extend the definition to $l_{b',c'}$ and $l_{c',a'}$, and then extend it to all triangles using the invariance under $\Gamma'$. Note that the lengths of $\Gamma'(l_{a',b'})$ are the same, as well as the 
lengths of all $\Gamma'(l_{b',c'})$, and the lengths of all $\Gamma'(l_{c',a'})$.

The lengths of the family of geodesics $\Gamma'(l_a)\cup\Gamma'(l_b)\cup\Gamma'(l_c)\cup
\Gamma'(l_{a',b'})\cup\Gamma'(l_{b',c'})\cup\Gamma'(l_{c',a'})$ are bounded from the below and from the above, and this family separates the surface $X_1$ into finite bounded polygons with uniformly bounded number of sides. Then the proof of Proposition \ref{prop:p3} extends to show that length spectrum Thurston's boundary coincides with $PML_{bdd}(X_1)$.

\vskip .2 cm

Denote by $X_2$ an infinite hyperbolic surface defined by Shiga \cite{Shi} that has geodesic pants decomposition with cuff lengths converging to infinity. The surface $X_2$ contains a sequence $\gamma_n$ of simple closed geodesics with $l_{X_2}(\gamma_n)\to\infty$ as $n\to\infty$ such that for each closed geodesic $\delta$ we have 
\begin{equation}
\label{eqn:lbound}
l_{X_2}(\delta )\geq \sum_{k=1}^{\infty}kl_{X_2}(\gamma_k)i(\gamma_k,\delta ),
\end{equation}
where only finitely many terms are non-zero.
Shiga \cite{Shi} proved that a sequence of full Dehn twists $f_n$ around the curve $\gamma_n$ diverges in the Teichm\"uller metric and it converges to the identity in the length spectrum metric. Thus the two metrics produce different topologies on $T(X_2)$. 

We define $\beta_n$ to be a measured lamination whose support is $\{\gamma_k\}_{k=1,\ldots ,n}$ such that, for $k=1,\ldots ,n$,
$$
\beta_n|_{\gamma_k}=l_{X_2}(\gamma_k).
$$
The projective class $[\beta_n]$ is in $PML_{bdd}(X_2)$. Define
$\beta_{*}$ to be a measured lamination on $X_2$ whose support is $\{\gamma_k\}_{k=1}^{\infty}$ such that,
 for all $k=1,2,\ldots $,
 $$
\beta_*|_{\gamma_k}=l_{X_2}(\gamma_k).
$$
It is clear that the projective class $[\beta_*]$ is not in $PML_{bdd}(X_2)$. 

We prove that $[\beta_n]\to [\beta_*]$ as $n\to\infty$ in the normalized supremum norm. Indeed, let $\delta$ be a simple closed geodesic in $X_2$. 
Then
$$
\frac{|i(\beta_n,\delta)-i(\beta_*,\delta )|}{l_{X_2}(\delta )}=\sum_{k=n+1}^{\infty}
\frac{i(\beta_k,\delta )}{l_{X_2}(\delta )}=\frac{\sum_{k=n+1}^{\infty}
i(\delta ,\gamma_k)l_{X_2}(\gamma_k )}{\sum_{k=1}^{\infty}ki(\delta ,\gamma_k)l_{X_2}(\gamma_k )}\leq\frac{1}{n+1}
$$
and $[\beta_*]$ is in length spectrum Thurston's boundary of $T(X_2)$. Therefore the boundary is larger than $PML_{bdd}(X_2)$. 

\vskip .2 cm

\noindent {\bf Open problem:} 
 Assume that a sequence in $T(X_0)$ converges to a bounded projective measured lamination in length spectrum Thurston's boundary. Is it true that the sequence converges in Thurston's boundary introduced using geodesic currents?

\end{document}